\newcommand{\N}{{\mathbb N}}
\newcommand{\C}{{\mathbb C}}
\newcommand{\D}{{\mathbb D}}
\newcommand{\Ha}{{\mathbb H}}
\newcommand{\capa}{\rm cap\,}
\newcommand{\are}{\rm area\,}
\newcommand{\wand}{wandering domain}
\newcommand{\bd}{Baker domain}
\newcommand{\tef}{transcendental entire function}
\newcommand{\sconn}{simply connected}
\newcommand\eps{\varepsilon}
\theoremstyle{plain}
\newtheorem{theorem}{Theorem}[section]
\newtheorem{corollary}{Corollary}[section]
\theoremstyle{definition}
\theoremstyle{remark}
\theoremstyle{problem}
\theoremstyle{example}
\begin{document}

\title[Boundaries of univalent Baker domains]{Boundaries of univalent Baker domains}

\author{P. J. Rippon}
\address{Department of Mathematics and Statistics\\
The Open University \\
Walton Hall\\
Milton Keynes MK7 6AA\\
UK}
\email{p.j.rippon@open.ac.uk}

\author{G. M. Stallard}
\address{Department of Mathematics and Statistics\\
The Open University \\
Walton Hall\\
Milton Keynes MK7 6AA\\
UK}
\email{g.m.stallard@open.ac.uk}



\subjclass{30D05, 37F10.\newline\hspace*{.32cm} Both authors are
supported by EPSRC grant EP/K031163/1.}


\begin{abstract}
Let~$f$ be a {\tef} and let~$U$ be a univalent Baker domain of~$f$. We prove a new result about the boundary behaviour of conformal maps and use this to show that the non-escaping boundary points of~$U$ form a set of harmonic measure zero with respect to~$U$. This leads to a new sufficient condition for the escaping set of~$f$ to be connected, and also a new general result on Eremenko's conjecture.
\end{abstract}

\maketitle

\section{Introduction}
\setcounter{equation}{0}
Let $f$ be a {\tef} and denote by $f^{n},\,n=0,1,2,\ldots\,$, the
$n$th iterate of~$f$. The {\it Fatou set} $F(f)$ is defined to be
the set of points $z \in \C$ such that $(f^{n})_{n \in \N}$ forms
a normal family in some neighborhood of $z$.  The components of
$F(f)$ are called {\it Fatou components}. The complement of
$F(f)$ is called the {\it Julia set} $J(f)$. An introduction to
the properties of these sets can be found in~\cite{wB93}.

The set $F(f)$ is completely invariant, so for any component $U$ of $F(f)$ there exists, for each $n=0,1,2,\ldots\,$, a component
of $F(f)$, which we call $U_n$, such that $f^n(U) \subset U_n$. If, for some $p\ge 1$, we have $U_p =U_0= U$, then we say that
$U$ is a {\it periodic component} of {\it period} $p$, assuming $p$ to be minimal. There are then four possible types of periodic
components; see \cite[Theorem~6]{wB93} for a classification.

The {\it escaping set}
\[
 I(f) = \{z: f^n(z) \to \infty \mbox{ as } n \to \infty \}
\]
was first studied in detail by Eremenko~\cite{aE89} who showed that $I(f)\ne\emptyset$ and indeed that $I(f)\cap J(f) \ne\emptyset$, and also made what is known as `Eremenko's conjecture' which states that all the components of $I(f)$ are unbounded.

Any Fatou component that meets $I(f)$ must lie in $I(f)$ by normality. A periodic Fatou component in $I(f)$ is called a {\it Baker domain}; see \cite{pR08}, for example, for the properties of this type of Fatou component, in particular that a Baker domain of a {\tef} must be unbounded and simply connected.  For the function $f(z)=z+1+e^{-z}$, studied by Fatou in \cite{pF26}, the set $F(f)$ is a completely invariant Baker
domain, whose boundary is $J(f)$. In this case the Baker domain has many boundary points in $I(f)$ and many that are {\it not} in~$I(f)$.

It is natural to ask whether every {\bd} of a {\tef} must have at least one boundary point in
$I(f)$. In \cite[Remark following the proof of Theorem~1.1]{RS11} we showed that if $U$ is an invariant Baker domain in which there is an orbit
$z_n=f^n(z_0)$, $n\in\N$, such that, for some $k>1$,
\begin{equation}\label{kcond}
|z_{n+1}|\ge k|z_n|,\quad\text{for }n\in\N,
\end{equation}
then $\partial U\cap I(f)^c$ has harmonic measure zero relative to~$U$. Since a Baker domain of~$f$ of period~$p$ is an invariant Baker domain of $f^p$ and~$f$ maps any boundary point of a Fatou component to a boundary point of a Fatou component, the analogous result holds for periodic Baker domains.

In \cite{BF01} Bara\'nski and Fagella studied {\tef}s with univalent Baker domains. A Baker domain~$U$ of~$f$ of period~$p$ is said to be {\it univalent} if $f^p$ is univalent in~$U$. In this paper we prove the following result about such Baker domains.

\begin{theorem}\label{thm1.1}
Let $f$ be a {\tef} and let~$U$ be a univalent {\bd} of~$f$. Then $\partial U\cap I(f)^c$ has harmonic measure zero relative to~$U$.

More precisely, if $\phi$ is a conformal map of the open unit disc~$\D$ onto~$U$, then for all $\zeta\in\partial \D$ apart from a set of capacity zero the angular limit $\phi(\zeta)$ exists and lies in $\partial U\cap I(f)$.
\end{theorem}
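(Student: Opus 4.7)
The plan is to transfer the dynamics to the unit disc via a Riemann map, use the Denjoy-Wolff theorem to locate an attracting boundary point for the induced self-map, and then invoke the promised new boundary-behaviour result to push the dynamical statement through to the boundary with only a capacity-zero exceptional set.

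First, one reduces to the invariant case: if $U$ has period $p$, then $U$ is an invariant univalent {\bd} of $f^p$, and since $I(f)=I(f^p)$ it suffices to prove the theorem for $p=1$. Assume therefore that $f\colon U\to U$ is univalent, let $\phi\colon\D\to U$ be a Riemann map, and set $g:=\phi^{-1}\circ f\circ\phi$. Then $g$ is a univalent self-map of $\D$ without fixed points in $\D$, since orbits in $U$ escape to infinity, so by the Denjoy-Wolff theorem there is $\zeta_0\in\partial\D$ with $g^n\to\zeta_0$ locally uniformly on $\D$. Since $\phi(g^n(0))=f^n(\phi(0))\to\infty$, standard properties of univalent {\bd}s show that $\phi$ has angular limit $\infty$ at $\zeta_0$.

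The core of the argument consists in applying the new boundary-behaviour result referred to in the abstract. I expect it to imply, in this setting, that there is an exceptional set $E\subset\partial\D$ of logarithmic capacity zero with the following property: for every $\zeta\in\partial\D\setminus E$, the angular limit $\phi(\zeta)$ exists as a finite point of $\partial U$, and each iterate $g^n$ admits an angular limit at $\zeta$ which lies on $\partial\D$, with $g^n(\zeta)\to\zeta_0$ non-tangentially as $n\to\infty$. Granted this, the intertwining relation $\phi\circ g=f\circ\phi$ extends to these boundary data, giving $f^n(\phi(\zeta))=\phi(g^n(\zeta))$ for every $n$; since $g^n(\zeta)\to\zeta_0$ non-tangentially and $\phi$ has angular limit $\infty$ at $\zeta_0$, Lindel\"of's theorem yields $f^n(\phi(\zeta))\to\infty$, so that $\phi(\zeta)\in\partial U\cap I(f)$. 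As sets of logarithmic capacity zero carry zero harmonic measure in $\D$, the conformal map $\phi$ transports this to zero harmonic measure in $U$, which also gives the first assertion.

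The main obstacle is the boundary-behaviour result itself. An almost-everywhere (Lebesgue) version of the picture should follow from Fatou's theorem and classical results describing how the boundary extension of an inner-function-like self-map of $\D$ approaches its Denjoy-Wolff point, but upgrading the exceptional set to one of capacity zero, while simultaneously controlling the angular behaviour of \emph{every} iterate $g^n$, requires a genuinely new capacity estimate. A secondary point is that $g$-preimages of negligible sets should remain negligible at the level of capacity, so that the functional equation can be iterated at the boundary; this sort of refinement is precisely what the new lemma is designed to supply.
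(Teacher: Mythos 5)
Your setup is right: reduce to the invariant case, conjugate $f|_U$ by a Riemann map to a fixed-point-free self-map $g$, and try to push escaping along boundary orbits. But there is a substantive gap in the middle, and it is precisely the gap that the paper's new conformal-map result (Theorem~\ref{conf-thm}) is designed to close.

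First, you underuse the univalence hypothesis. Because $f$ is univalent on the simply connected domain $U$, the image $f(U)$ is simply connected; combined with the fact that $U\setminus f(U)$ has at most one point, this forces $f(U)=U$, so $g$ is a \emph{conformal automorphism} of $\Ha$ --- a fixed-point-free M\"obius map, normalizable to $w\mapsto\lambda w$ or $w\mapsto w+1$. This rigid structure (not merely the Denjoy--Wolff convergence $g^n\to\zeta_0$) is what the paper exploits: boundary orbits are exactly $t\mapsto \lambda^n t$ or $t\mapsto t+n$, and the self-similarity of these orbits is essential to the capacity estimates.

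Second, and more seriously, the step ``$g^n(\zeta)\to\zeta_0$ \emph{non-tangentially}, so Lindel\"of's theorem gives $\phi(g^n(\zeta))\to\infty$'' does not work. The points $g^n(\zeta)$ lie on $\partial\D$, so they approach $\zeta_0$ along the boundary, i.e.\ tangentially, not non-tangentially; and Lindel\"of's theorem is about the limit of $\phi$ along a curve \emph{inside} $\D$ approaching a \emph{single} boundary point, not about what the radial boundary values of $\phi$ do at nearby boundary points. Knowing that $\phi$ has angular limit $\infty$ at $\zeta_0$ gives no information whatsoever about $\phi(t)$ for $t\in\partial\Ha$ near $\zeta_0$: the boundary values can oscillate arbitrarily on a set of full measure. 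The whole difficulty of the theorem is to show that, after removing a capacity-zero set, the boundary values of $\phi$ at the orbit points $g^n(t)$ do tend to $\infty$. This is exactly the content of the paper's Theorem~\ref{conf-thm}, a quantitative capacity estimate (via Pommerenke's Theorem~9.24) on the set of $t$ in each fundamental annulus where $\phi$ stays far from $\infty$, followed by a Borel--Cantelli-style summation using Theorem~\ref{Pomm2}(d). Your proposal acknowledges that such a lemma is needed, but the mechanism you sketch (Denjoy--Wolff plus Lindel\"of) cannot supply it, so as written the proof does not go through.
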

The first statement of Theorem~\ref{thm1.1} follows from the second because the set of angular limits of the conformal map~$\phi$ forms the set of accessible boundary points of~$U$, which has full harmonic measure in $U$, and any set of boundary points of $\partial \D$ of linear measure zero (in particular, of capacity zero) gives rise to a set of accessible boundary points of~$U$ of harmonic measure zero; see \cite[page~206]{GM05}.

The paper \cite{BF01} gives several examples of univalent {\bd}s: some with connected boundaries and some with disconnected boundaries; some for which \eqref{kcond} holds and some for which it does not hold. Baker and Dom\'inguez \cite[Corollary~1.3]{BD99} showed that the boundary of any non-univalent Baker domain is disconnected, and indeed has uncountably many components, so we have the following immediate corollary of Theorem~\ref{thm1.1}.

\begin{corollary}\label{cor1.1}
Let~$f$ be a {\tef} and let~$U$ be a {\bd} of~$f$ whose boundary is connected. Then $\partial U\cap I(f)^c$ has harmonic measure zero relative to~$U$.
\end{corollary}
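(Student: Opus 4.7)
The plan is almost entirely to combine two facts already in hand, so the proof should be short. The first ingredient is Theorem~\ref{thm1.1}, which gives the desired harmonic measure conclusion whenever the {\bd}~$U$ is univalent. The second ingredient is the result of Baker and Dom\'inguez (\cite[Corollary~1.3]{BD99}), quoted in the excerpt, which says that if a {\bd}~$U$ is \emph{not} univalent, then $\partial U$ is disconnected (in fact has uncountably many components).

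I would therefore argue by contrapositive on the dichotomy univalent/non-univalent. Suppose $U$ is a {\bd} of~$f$ whose boundary $\partial U$ is connected. Then $U$ cannot fall into the non-univalent case, for otherwise the Baker--Dom\'inguez result would force $\partial U$ to have uncountably many components, contradicting connectedness. Hence $U$ is a univalent {\bd} of~$f$, and Theorem~\ref{thm1.1} applies directly to give $\partial U\cap I(f)^c$ harmonic measure zero relative to~$U$.

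Since both building blocks are already available, there is no genuine obstacle; the only thing to watch is that the Baker--Dom\'inguez statement applies at the correct level of generality (periodic Baker domains of a {\tef}, not merely invariant ones), which it does, matching the scope of Theorem~\ref{thm1.1}. In particular, one does not need to distinguish the invariant case from the periodic case: for a period-$p$ Baker domain one may either quote Theorem~\ref{thm1.1} directly (it is stated for Baker domains, not just invariant ones), or pass to $f^p$, on which $U$ is an invariant univalent Baker domain whose boundary is still connected, and then invoke the theorem.
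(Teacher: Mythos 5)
Your argument is exactly the one the paper intends: the sentence preceding Corollary~\ref{cor1.1} invokes Baker and Dom\'inguez to conclude that a Baker domain with connected boundary must be univalent, so Theorem~\ref{thm1.1} applies immediately. Your proposal is correct and matches the paper's approach.
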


It remains an intriguing open question whether $\partial U\cap I(f)\ne \emptyset$ whenever $U$ is a Baker domain. Note that in \cite{RS11} we showed that if~$U$ is any wandering domain in $I(f)$, then almost all points of $\partial U$, with respect to harmonic measure in $U$, are escaping.

We prove Theorem~\ref{thm1.1} using a new result on the boundary behaviour of conformal maps, which we state and prove in Section~3.

Using Theorem~\ref{thm1.1} together with \cite[Theorem~5.1]{RS11}, we can give a new sufficient condition for $I(f)$ to be connected, and so satisfy Eremenko's conjecture in a particularly strong way.

\begin{theorem}\label{thm1.2}
Let~$f$ be a {\tef} and let~$E$ be a set such that $E\subset I(f)$ and $J(f)\subset\overline{E}$. Either $I(f)$ is connected or it has infinitely many components that meet~$E$; in particular, if $E$ is connected, then $I(f)$ is connected.
\end{theorem}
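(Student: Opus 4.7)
The plan is to deduce Theorem~\ref{thm1.2} by combining Theorem~\ref{thm1.1} with \cite[Theorem~5.1]{RS11}. The natural reading of how the introduction flags this combination is that \cite[Theorem~5.1]{RS11} reduces the dichotomy for~$I(f)$ to a single boundary condition on each Fatou component $U\subset I(f)$, namely that $\partial U\cap I(f)^c$ has harmonic measure zero in~$U$; once this is secured for every escaping Fatou component, the topological/combinatorial argument of \cite[Theorem~5.1]{RS11} delivers the dichotomy.

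To verify this condition I would split into cases according to the type of~$U$. Every Fatou component contained in $I(f)$ is either a wandering domain in $I(f)$ or an eventual preimage of a Baker domain. In the wandering-domain case the required statement is exactly the earlier result quoted near the end of Section~1, that almost every boundary point (with respect to harmonic measure) of a wandering domain in $I(f)$ is escaping. In the Baker-domain case the statement is precisely Theorem~\ref{thm1.1} when~$U$ is univalent, and Corollary~\ref{cor1.1} when $\partial U$ is connected. For a non-univalent Baker domain, \cite[Corollary~1.3]{BD99} tells us that $\partial U$ has uncountably many components, which either falls outside the hypothesis of \cite[Theorem~5.1]{RS11} or can be exploited directly to produce many components of $I(f)$ meeting~$E$. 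Once the hypothesis is verified, invoking \cite[Theorem~5.1]{RS11} yields the dichotomy, and the `in particular' clause is immediate: a connected $E\subset I(f)$ lies in a single component of $I(f)$, ruling out the `infinitely many components meeting $E$' alternative and forcing $I(f)$ to be connected.

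The principal obstacle I anticipate is the bookkeeping around non-univalent Baker domains: Theorem~\ref{thm1.1} has no content there, so one must either rely on a formulation of \cite[Theorem~5.1]{RS11} whose hypothesis sidesteps this case, or use the Baker--Dom\'inguez uncountable-boundary-components result to generate infinitely many components of $I(f)$ meeting~$E$ from the boundary structure of~$U$ itself. A secondary point to check carefully is that the full-harmonic-measure conclusion of Theorem~\ref{thm1.1} (proved for invariant univalent Baker domains) passes correctly to periodic univalent Baker domains and their preimages, which is the passage indicated in the paragraph following \eqref{kcond}.
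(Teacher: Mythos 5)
The ingredients you name are the right ones, but the logical skeleton you propose for combining them with \cite[Theorem~5.1]{RS11} is not what that theorem supplies, and as written your argument does not close. You read Theorem~5.1 as reducing the dichotomy to the condition that $\partial U\cap I(f)^c$ has harmonic measure zero in $U$ for every escaping Fatou component, and then set out to verify this condition component by component. But the hypothesis of Theorem~5.1 is simply that $E\subset I(f)$, $J(f)\subset\overline{E}$, and $E$ is contained in finitely many components of $I(f)$; its conclusion is a structure theorem: all components of $I(f)$ are unbounded, $I(f)\cap J(f)$ together with every escaping wandering domain and every Baker domain having at least one boundary point in $I(f)$ all lie in a single component $I_1$, and the only other possible components of $I(f)$ are Baker domains whose boundary is entirely disjoint from $I(f)$, together with their preimage components. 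The dichotomy therefore follows once one shows that, under these hypotheses, no Baker domain can have its entire boundary outside $I(f)$ --- then that second class is empty and $I(f)=I_1$. Wandering domains never need a separate verification, and the non-univalent Baker domains do not ``fall outside the hypothesis of Theorem~5.1.''

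The step you leave as a gesture (``can be exploited directly to produce many components of $I(f)$ meeting $E$'') is exactly the missing argument, and it uses Corollary~\ref{cor1.1} in contrapositive form, a move absent from your proposal. If $U$ is a Baker domain with $\partial U\cap I(f)=\emptyset$, then $\partial U\cap I(f)^c$ certainly does not have harmonic measure zero, so by Corollary~\ref{cor1.1} the boundary $\partial U$ must be disconnected --- this single appeal covers both the univalent case (via Theorem~\ref{thm1.1}) and the non-univalent case (via Baker--Dom\'inguez). Then $U$ has at least two complementary components, each closed, unbounded, and meeting $J(f)$. Since $J(f)=\overline{I(f)\cap J(f)}$, the set $I(f)\cap J(f)$ is not contained in a single complementary component of $U$, so the connected component $I_1$ meets at least two of them; being connected, $I_1$ must meet the boundary of one of these complementary components, which is a subset of $\partial U$, contradicting $\partial U\cap I(f)=\emptyset$. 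Without this topological argument your proposal does not rule out the problematic Baker domains and so does not deliver the dichotomy.
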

Several subsets of $I(f)$ have been studied, involving different rates of escape, including: the {\it fast escaping set} $A(f)$ (see \cite{BH99} and \cite{RS09}), the {\it slow escaping set} $L(f)$  and {\it moderately slow escaping set} $M(f)$ (see \cite{RS09a}),  the {\it quite fast escaping set} $Q(f)$ (see \cite{RS12c}), $Z(f)$ (see \cite{RS00}) and $I'(f)$ (see \cite{wB95a}). Each of these sets contains at least three points and is backwards invariant, so its closure contains $J(f)$ by Montel's theorem. Thus we obtain the following corollary of Theorem~\ref{thm1.2}.
\begin{corollary}
Let $f$ be a {\tef}. If one of the sets $A(f)$,~$L(f)$,~$M(f)$,~$Q(f)$, $Z(f)$ or~$I'(f)$ is connected, then I(f) is connected.
\end{corollary}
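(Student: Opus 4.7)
The plan is to apply Theorem~\ref{thm1.2} directly, taking $E$ to be whichever of the six named sets is hypothesised to be connected. To do so I need only verify the two structural hypotheses of Theorem~\ref{thm1.2}: that $E\subset I(f)$ and that $J(f)\subset\overline{E}$.

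The first inclusion is immediate from the definitions of $A(f)$, $L(f)$, $M(f)$, $Q(f)$, $Z(f)$ and $I'(f)$: each of these sets is defined by imposing some rate-of-escape condition strictly stronger than, or a refinement of, membership of $I(f)$, so in every case $E\subset I(f)$. For the second inclusion I would invoke the remark made just before the statement of the corollary: each of these six sets contains at least three points and is completely backwards invariant under $f$. By Montel's theorem the closure of any backwards-invariant set containing three points contains $J(f)$, giving $J(f)\subset\overline{E}$ in each case. This reduces the corollary to a purely abstract consequence of Theorem~\ref{thm1.2}.

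It then remains to run the dichotomy in Theorem~\ref{thm1.2}. Suppose $E$ is connected. Theorem~\ref{thm1.2} tells us that either $I(f)$ is connected, or $I(f)$ has infinitely many components each of which meets $E$. In the latter case, since $E\subset I(f)$ and $E$ is connected, $E$ must lie inside a single component of $I(f)$, so $E$ can meet only one component — contradicting the "infinitely many" alternative. Hence $I(f)$ is connected, as required.

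The main obstacle here is essentially bookkeeping rather than mathematics: I would need to check carefully, set by set, that the definitions in \cite{BH99}, \cite{RS09}, \cite{RS09a}, \cite{RS12c}, \cite{RS00} and \cite{wB95a} do yield backwards invariance and at least three points (a fact asserted but not reproved in the paper). Once those definitional facts are accepted, the corollary is simply a one-line application of Theorem~\ref{thm1.2}.
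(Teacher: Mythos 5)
Your proof is correct and follows exactly the paper's route: verify $E\subset I(f)$ and $J(f)\subset\overline{E}$ (via backward invariance, the three-point property, and Montel's theorem), then apply the ``in particular'' clause of Theorem~\ref{thm1.2}. The paper states this argument in the sentence immediately preceding the corollary, so nothing is missing; your caveat about checking backward invariance and cardinality for each of the six sets applies equally to the paper, which also leaves those facts to the cited references.
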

The fast escaping set $A(f)$ has the property that all its components are unbounded \cite[Theorem~1.1]{RS09}. Therefore, if we apply Theorem~\ref{thm1.2} in the case when the set $E$ is $A(f)$, then we obtain the following result, which seems to be the strongest general result so far on Eremenko's conjecture. This result can also be deduced directly from \cite[Theorem~5.1]{RS11} in a different way.
\begin{theorem}
Let $f$ be a {\tef}. Either $I(f)$ is connected or it has infinitely many unbounded components.
\end{theorem}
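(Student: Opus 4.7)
The plan is to deduce this directly from Theorem~\ref{thm1.2} by making the canonical choice $E=A(f)$, the fast escaping set. First I would check that $E=A(f)$ satisfies the hypotheses of Theorem~\ref{thm1.2}: by definition $A(f)\subset I(f)$, and $A(f)$ is backwards invariant and contains at least three points, so Montel's theorem gives $J(f)\subset\overline{A(f)}$ (this is exactly the argument already used in the corollary preceding the theorem). Thus Theorem~\ref{thm1.2} applies and yields the dichotomy that either $I(f)$ is connected, or $I(f)$ has infinitely many components that meet $A(f)$.

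Next I would convert ``components meeting $A(f)$'' into ``unbounded components''. The key input here is \cite[Theorem~1.1]{RS09}, which states that every component of $A(f)$ is unbounded. Now if $C$ is a component of $I(f)$ with $C\cap A(f)\ne\emptyset$, pick $z\in C\cap A(f)$ and let $K$ be the component of $A(f)$ containing~$z$. Since $A(f)\subset I(f)$, the connected set $K$ lies in $I(f)$ and meets $C$, hence $K\subset C$. As $K$ is unbounded, so is $C$. Combining this with the dichotomy, the alternative to $I(f)$ being connected is that it has infinitely many unbounded components, which is the claim.

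There is no substantial obstacle: the argument is a short chain of implications using Theorem~\ref{thm1.2} as a black box together with the unboundedness of the components of $A(f)$. The only point requiring any care is the transfer of unboundedness from components of $A(f)$ to the components of $I(f)$ that contain them, and this is immediate from $A(f)\subset I(f)$ and the definition of a component. The paper also remarks that the result can alternatively be obtained directly from \cite[Theorem~5.1]{RS11}; that route would presumably run the same connectedness/non-separation argument against $A(f)$ without the intermediate packaging as Theorem~\ref{thm1.2}, but the route via Theorem~\ref{thm1.2} is the most economical.
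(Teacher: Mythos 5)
Your proposal is correct and matches the paper's own deduction exactly: the paper obtains this theorem by applying Theorem~\ref{thm1.2} with $E=A(f)$ and invoking \cite[Theorem~1.1]{RS09} that all components of $A(f)$ are unbounded. Your added step transferring unboundedness from a component of $A(f)$ to the component of $I(f)$ containing it is the intended (and correct) gluing argument.
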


{\it Acknowledgement}\quad
The authors are grateful to Walter Bergweiler and Dierk Schleicher for a discussion that led to the formulation of Theorem~1.3.

\section{Background material}
\setcounter{equation}{0}
We require several fundamental results from complex analysis, all of which can be found in~\cite{Pomm}, which we state here for the reader's convenience. The first two results concern the boundary behaviour of a conformal map~$f$ of the unit disc $\D=\{z:|z|<1\}$ into $\C$. Here {\capa} denotes logarithmic capacity and $\Lambda$ denotes linear measure, both of which are defined for Borel sets.
\begin{theorem}\label{Pomm0}
Suppose that $f:\D\to\C$ is a conformal map. Then for all $\zeta\in\partial \D$ apart from a set of capacity~0 the angular limit $f(\zeta)$ exists and is finite.
\end{theorem}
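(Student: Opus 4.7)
The plan is to reduce Theorem~\ref{Pomm0} to two classical facts: a universal statement that meromorphic functions with finite spherical Dirichlet integral have angular limits in $\widehat{\C}$ off a capacity-zero set, and a schlicht-function statement that level sets of angular limits have capacity zero.

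First I would regard $f$ as a meromorphic map $\D\to\widehat{\C}$ and exploit univalence via the spherical area formula. Using the spherical metric $|dw|/(1+|w|^{2})$, the change-of-variables formula for a one-to-one map gives
\[
\int_{\D}\frac{|f'(z)|^{2}}{(1+|f(z)|^{2})^{2}}\,dA(z)=\text{(spherical area of }f(\D))\le\pi,
\]
so $f$ has finite spherical Dirichlet integral. I would then invoke Beurling's theorem (in the form proved in Pommerenke): any meromorphic function on $\D$ with finite spherical Dirichlet integral has an angular limit in $\widehat{\C}$ at every point of $\partial\D$ outside an exceptional set $E_{1}\subset\partial\D$ of logarithmic capacity zero. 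Applied to~$f$ this furnishes angular limits in~$\widehat{\C}$ off~$E_{1}$, though a priori such a limit might equal~$\infty$.

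Second, to ensure the limits are \emph{finite} except on a capacity-zero set, I would apply the classical univalent-function theorem (again in Pommerenke): for any injective analytic $f$ on~$\D$ and any $a\in\widehat{\C}$, the set of $\zeta\in\partial\D$ at which the angular limit of~$f$ equals~$a$ has logarithmic capacity zero. Taking $a=\infty$ yields a capacity-zero set $E_{2}\subset\partial\D$ collecting all boundary points at which the angular limit is infinite. On $\partial\D\setminus(E_{1}\cup E_{2})$, a set whose complement has capacity zero, $f$ admits a finite angular limit, as required.

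The main obstacle lies in the two classical inputs rather than in combining them. The implication \emph{finite spherical Dirichlet integral $\Rightarrow$ capacity-zero exceptional boundary set} rests on non-elementary subharmonic potential theory (Green potentials, logarithmic capacity, fine limits), while the schlicht-functions statement depends on L\"owner-type distortion estimates. Since both are established in Pommerenke's book, the proof here proceeds by citing them rather than by reproducing the arguments.
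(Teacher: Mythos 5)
The paper does not prove this statement; it cites it directly as Beurling's classical theorem, \cite[Theorem~9.19]{Pomm}. Your outline is a correct reconstruction of the standard proof strategy underlying that citation: (i) a univalent $f$ has spherical Dirichlet integral bounded by the area of the sphere; (ii) finite spherical Dirichlet integral gives angular limits in $\widehat{\C}$ off a set of logarithmic capacity zero; (iii) for a univalent function the set of $\zeta\in\partial\D$ where the angular limit equals a prescribed $a\in\widehat{\C}$ has capacity zero, so taking $a=\infty$ upgrades ``angular limit exists in $\widehat{\C}$'' to ``angular limit exists and is finite''. In fact Pommerenke's Theorem~9.19 already packages (ii) for univalent maps together with (iii), so your decomposition is an unpacking of the cited theorem into its two classical ingredients rather than a genuinely different route. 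Since you cite rather than prove those two inputs, your proposal has the same logical status as the paper's treatment (a citation, here with useful scaffolding), and there is no gap provided one accepts the underlying potential-theoretic results from Pommerenke.
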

Theorem~\ref{Pomm0} is a classical result of Beurling \cite[Theorem~9.19]{Pomm}. Throughout the paper we use the notation $f(\zeta)$, where $\zeta\in \partial \D$, for the angular limit at~$\zeta$ of the conformal map~$f$, whenever this exists.

The second result on conformal maps \cite[Theorem~9.24]{Pomm} is a quantitative version of the fact that those boundary points of $f(\D)$ that can only be reached along relatively long paths in $f(\D)$ form a small subset of $\partial f(\D)$ in some sense.
\begin{theorem}\label{Pomm1}
Suppose that $f:\D\to\C$ is a conformal map, $V\subset f(\D)$ is open, $E\subset \partial\D$ is a Borel set, and $\alpha\in (0,1]$. If
\begin{itemize}
\item
${\rm dist}(f(0), V)\ge \alpha |f'(0)|,$
\item
$\Lambda(f(C)\cap V)\ge \beta>0$, for all curves $C$ in $\D$ that connect $0$ to $E$,
\end{itemize}
then
\[
\Lambda(E)\le 2\pi\, {\capa}E<\frac{15}{\sqrt{\alpha}}\exp\left(-\frac{\pi\beta^2}{{\are} V}\right).
\]
%
\end{theorem}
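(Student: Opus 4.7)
The first inequality $\Lambda(E)\le 2\pi\,{\capa}E$ is a classical estimate from potential theory on the circle; it follows by integrating the equilibrium potential of $E$ against normalised Lebesgue measure on $E$ and comparing with the definition of capacity. The plan for the quantitative exponential bound on ${\capa}E$ is a length-area (extremal length) argument.

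Let $\Gamma$ denote the family of rectifiable curves in $\D$ joining $0$ to $E$, and $f(\Gamma)$ the image family in $f(\D)$. The first step is conformal invariance of the module of a curve family, which gives $\operatorname{mod}(\Gamma)=\operatorname{mod}(f(\Gamma))$. The second step is to apply the admissible test metric $\rho=(1/\beta)\chi_V$ on $\C$: by the second hypothesis, for every $C\in\Gamma$,
\[
\int_{f(C)} \rho\,|dw|=\frac{1}{\beta}\,\Lambda(f(C)\cap V)\ge 1,
\]
while the total $\rho$-area equals ${\are}V/\beta^2$. The definition of module then yields
\[
\operatorname{mod}(f(\Gamma))\ge \frac{\beta^2}{{\are}V}.
\]

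The final step is to invoke the classical comparison between the logarithmic capacity of a Borel set $E\subset\partial\D$ and the module of curves from the origin to $E$ in $\D$, which provides an inequality of the shape
\[
{\capa}E\le K(\alpha)\exp\bigl(-\pi\operatorname{mod}(\Gamma)\bigr).
\]
The distance hypothesis $\operatorname{dist}(f(0),V)\ge \alpha|f'(0)|$ enters at precisely this point: via a Koebe-type distortion estimate it controls how the neighbourhood of $0\in\D$ is mapped by $f$ relative to $V$, which in turn pins down the absolute constant. Careful bookkeeping of the Koebe constants, together with the factor $2\pi$ from the first inequality $\Lambda\le 2\pi\,{\capa}$, should produce the explicit value $15/\sqrt{\alpha}$.

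The main obstacle is pinning down this explicit constant $15/\sqrt{\alpha}$. The length-area step cleanly produces the correct exponential factor $\exp(-\pi\beta^2/{\are}V)$, and the conformal-invariance step is automatic, but translating the extrinsic hypothesis on $\operatorname{dist}(f(0),V)$ into the correct form of the capacity-module comparison requires careful use of Koebe distortion estimates near the origin; that is where essentially all of the numerical work sits, and it is the reason the constant depends on $\alpha$ only through $\alpha^{-1/2}$ rather than, say, $\alpha^{-1}$.
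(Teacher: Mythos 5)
First, a point of comparison: the paper does not prove this result at all --- it is quoted verbatim from Pommerenke's book (Theorem~9.24 there) as background material, so there is no internal argument to measure your sketch against. Your outline is the standard extremal-length strategy, which is essentially how the result is proved in that reference, and the length--area step is sound: the test metric $\rho=(1/\beta)\chi_V$ shows that the \emph{extremal length} of the image curve family is at least $\beta^2/{\are}V$. (Be careful with conventions: with the usual definition $\operatorname{mod}\Gamma=\inf_\rho\int\rho^2\,dA$ over admissible $\rho$, a test metric gives an \emph{upper} bound on the module; your inequalities are consistent only if ``module'' means extremal length, i.e.\ the reciprocal, throughout.)

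The genuine gap is in your final step. The ``classical comparison'' ${\capa}E\le K(\alpha)\exp(-\pi\lambda(\Gamma))$ with $\Gamma$ the family of all curves from the \emph{origin} to $E$ is not a correct statement: the extremal distance from a single point to any set is infinite (metrics such as $\rho(z)=c/(|z|\log(1/|z|))$ near $0$ have divergent length along every curve reaching the origin but finite area), so the right-hand side would vanish and the inequality would force ${\capa}E=0$. The correct comparison (Pommerenke's Theorem~9.17, a Pfluger-type estimate) concerns the family $\Gamma_r$ of curves from the circle $\{|z|=r\}$ to $E$, and its constant blows up like $r^{-1/2}$ as $r\to0$. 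This is exactly where the hypothesis ${\rm dist}(f(0),V)\ge\alpha|f'(0)|$ is consumed: by the Koebe distortion theorem, $f(\{|z|<r\})$ lies in a disc about $f(0)$ of radius roughly $r|f'(0)|$, so for $r$ comparable to $\alpha$ this image avoids $V$; consequently every curve from $\{|z|=r\}$ to $E$ extends to a curve from $0$ to $E$ without changing $\Lambda(f(C)\cap V)$, the length hypothesis transfers to $\Gamma_r$, and the Pfluger estimate yields ${\capa}E\le (c/\sqrt{r})\exp(-\pi\beta^2/{\are}V)$. That truncation-plus-Pfluger step --- not the length--area computation --- is where both the validity of the exponential bound and the exponent $-1/2$ in $\alpha^{-1/2}$ originate; as written, your step invoking curves from the origin would fail.
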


%
We also need various basic results on logarithmic capacity, which can be found in \cite[pages~204, 208 and 209]{Pomm}.
\begin{theorem}\label{Pomm2} Let $E$ and $E_n$, $n\ge 1$, be Borel subsets of $\C$.
\begin{itemize}
\item[(a)] If $E_1\subset E_2$, then ${\capa}E_1 \le {\capa}E_2$.
\item[(b)] If $\phi(z)=az+b$, then ${\capa}\phi(E)= |a|\,{\capa}E$.
\item[(c)] If $\phi$ is a Lipschitz map with constant~$M>0$, then ${\capa}\phi(E)\le M\,{\capa}E$.
\item[(d)] If $E=\bigcup_{n=1}^{\infty}E_n$ and ${\rm diam\,}E\le d$, then
\[
\frac{1}{\log(d/{\capa}E)}\le \sum_{n=1}^{\infty}\frac{1}{\log(d/{\capa}E_n)}.
\]
\item[(e)] The union of countably many sets of capacity zero has capacity zero.
\item[(f)] If~$\phi$ is a M\"obius transformation and~$E$ has capacity zero, then $\phi(E)$ has capacity zero.
\end{itemize}
\end{theorem}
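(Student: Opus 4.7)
The plan is to derive all six statements from the standard energy-theoretic definition of logarithmic capacity. For a compact set $E\subset \C$, define the Robin constant
\[
V(E)=\inf_{\mu} I(\mu), \qquad I(\mu)=\iint \log\frac{1}{|z-w|}\,d\mu(z)\,d\mu(w),
\]
where the infimum is over Borel probability measures $\mu$ supported on~$E$, and set $\capa E=e^{-V(E)}$, extending to Borel sets by inner regularity $\capa E=\sup\{\capa K: K\subset E\text{ compact}\}$. Parts (a)--(c) will fall out of straightforward manipulations of $I(\mu)$; part (d) is the main obstacle and will require a careful balancing of the equilibrium potential against the pieces $E_n$; parts (e) and (f) are corollaries.

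For (a), since every admissible $\mu$ on $E_1$ is admissible on $E_2$, the infimum defining $V(E_2)$ is over a larger class, so $V(E_1)\ge V(E_2)$. For (b), with $\phi(z)=az+b$ and $\nu=\phi_*\mu$ I compute $I(\nu)=I(\mu)-\log|a|$, so $V(\phi(E))=V(E)-\log|a|$. For (c), I use essentially the same calculation: for $\nu$ a probability measure on $\phi(E)$, pulling back through a Borel section of $\phi$ yields $\mu$ on $E$ with $\phi_*\mu=\nu$; the Lipschitz bound $|\phi(z)-\phi(w)|\le M|z-w|$ gives $I(\nu)\ge I(\mu)-\log M\ge V(E)-\log M$, hence $V(\phi(E))\ge V(E)-\log M$, i.e.\ $\capa \phi(E)\le M\,\capa E$. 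The usual reduction to compact subsets handles the Borel case.

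For (d), the hardest part, I would argue as follows. Assume $\capa E>0$ (otherwise the inequality is trivial) and let $\mu$ be the equilibrium measure on (a compact exhaustion of)~$E$, so $V(E)=I(\mu)$. Decompose $\mu$ using the cover by writing $\mu_n = \mu|_{E_n}/\mu(E_n)$ (after disjointifying the $E_n$) and set $\lambda_n=\mu(E_n)$, so $\sum\lambda_n=1$. Then
\[
V(E)=I(\mu)=\sum_{n,m}\lambda_n\lambda_m \iint \log\tfrac{1}{|z-w|}\,d\mu_n(z)\,d\mu_m(w).
\]
The off-diagonal terms are bounded below by $\log(1/d)$ since $\text{diam}\,E\le d$, while the diagonal terms $I(\mu_n)\ge V(E_n)$. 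Rearranging gives
\[
V(E)\ge \sum_n \lambda_n^2 V(E_n) - (1-\sum_n\lambda_n^2)\log d,
\]
which, after optimizing the weights $\lambda_n$ subject to $\sum\lambda_n=1$ by a Lagrange multiplier argument (the optimum being $\lambda_n\propto 1/\log(d/\capa E_n)$), yields exactly the stated inequality $1/\log(d/\capa E)\le \sum 1/\log(d/\capa E_n)$. This Lagrangian optimization is the technical crux.

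Finally, (e) is immediate from (d): if every $\capa E_n=0$, then each term $1/\log(d/\capa E_n)$ is zero (or the compact exhaustion argument can be applied to any compact $K\subset E$ with diameter~$d$), forcing $\capa E=0$. For (f), write a M\"obius $\phi$ as a composition of an affine map and an inversion; away from its pole $z_0$, $\phi$ is locally Lipschitz on each annulus $A_n=\{1/n\le |z-z_0|\le n\}$, so by (c) each $\phi(E\cap A_n)$ has capacity zero, and (e) then gives $\capa \phi(E\setminus\{z_0\})=0$; the possibly missing single point is itself a set of capacity zero. Combined with (b), this completes the list. The only genuinely delicate step is the weighted inequality in (d); the rest is essentially bookkeeping on the energy functional.
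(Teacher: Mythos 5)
The paper does not prove these statements at all: it simply cites them as standard facts from Pommerenke's book (pages 204, 208 and 209), so there is no ``paper proof'' to match. Your energy-theoretic derivation is essentially the standard one found there, and it is correct in all six parts; the genuinely nontrivial step is (d), and your decomposition of the equilibrium energy into diagonal and off-diagonal blocks, with the off-diagonal integrand bounded below by $\log(1/d)$, is exactly the right mechanism: it yields $\log(d/\capa E)\ge\sum_n\lambda_n^2\log(d/\capa E_n)$, after which Cauchy--Schwarz applied to $1=\sum_n\lambda_n$ gives the stated reciprocal inequality. Two small points deserve tightening. First, in (d) the phrase ``optimizing the weights $\lambda_n$'' inverts the logic: the weights $\lambda_n=\mu(E_n)$ are dictated by the equilibrium measure and are not yours to choose; what you actually need (and what the Lagrange/Cauchy--Schwarz computation supplies) is that $\sum_n\lambda_n^2 v_n\ge\bigl(\sum_n v_n^{-1}\bigr)^{-1}$ for \emph{every} admissible choice of weights, so that the particular weights coming from $\mu$ are automatically at least the minimum. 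You should also note that pieces with $\capa E_n=0$ carry no $\mu$-mass (a finite-energy measure cannot charge a polar set), so the corresponding terms drop out consistently on both sides. Second, in (c) the existence of a Borel section of a Lipschitz map is not free; it follows from a measurable-selection theorem (e.g.\ Federer--Morse for continuous maps on compacta), and you should say so, or restrict to compact $E$ where this is standard and then invoke inner regularity. Neither point is a gap in substance, and parts (a), (b), (e) and (f) are handled correctly.
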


Finally, we need Bagemihl's ambiguous point theorem; see \cite[Corollary~2.20]{Pomm}. Let~$f$ be a complex-valued function defined in~$\D$. For $\zeta\in \partial \D$ and a path $\gamma\subset \D$, we define the {\it cluster set} of~$f$ at~$\zeta$ along~$\gamma$ as follows:
\[
C_{\gamma}(f,\zeta)=\{w: \lim_{n\to\infty} f(z_n)= w,\;\text{for some sequence }z_n\in \gamma\;\text{such that } \lim_{n\to\infty}z_n=\zeta\}.
\]
A point $\zeta\in\partial \D$ is said to be an {\it ambiguous point} of~$f$ if there exist two paths~$\gamma$ and~$\gamma'$ in~$\D$ each ending at~$\zeta$ such that $C_{\gamma}(f,\zeta)\cap C_{\gamma'}(f,\zeta)=\emptyset$.

Bagemihl's theorem is as follows --- note that there are almost no hypotheses here about the function~$f$; it need not be continuous even.
\begin{theorem}\label{Bag}
Let~$f$ be a complex-valued function with domain~$\D$. Then~$f$ has at most countably many ambiguous points.
\end{theorem}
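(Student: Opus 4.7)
I would prove this classical theorem of Bagemihl by combining a countable classification of ambiguous points with a planar-topology argument. Given an ambiguous point $\zeta$ with associated paths $\gamma,\gamma'$, the cluster sets $C_{\gamma}(f,\zeta)$ and $C_{\gamma'}(f,\zeta)$ are disjoint closed subsets of the Riemann sphere, so they can be separated by two open spherical disks $D_1,D_2$ with rational centres and radii and disjoint closures. Truncating $\gamma$ and $\gamma'$ suitably yields subarcs $\tilde\gamma,\tilde\gamma'$ ending at $\zeta$ with $f(\tilde\gamma)\subset D_1$ and $f(\tilde\gamma')\subset D_2$; a further truncation arranges the other endpoints to be points of $\D$ with rational coordinates. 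In this way every ambiguous point is assigned a quadruple $(D_1,D_2,p_1,p_2)$ from a countable index set, so it suffices to show that each class $A(D_1,D_2,p_1,p_2)$ of ambiguous points sharing the same quadruple is countable.

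Fix such a quadruple. For $\zeta\in A(D_1,D_2,p_1,p_2)$ the subarcs $\tilde\gamma,\tilde\gamma'$ lie in the disjoint preimages $f^{-1}(D_1)$ and $f^{-1}(D_2)$, so they meet only at $\zeta$, and their union with $\{\zeta\}$ is a Jordan arc $\alpha_\zeta\subset\overline{\D}$ from $p_1$ to $p_2$ meeting $\partial\D$ exactly at the single point $\zeta$. A useful structural observation is that for distinct $\zeta\ne\zeta'$ in the same class the $D_1$-branch of $\alpha_\zeta$ is automatically disjoint from the $D_2$-branch of $\alpha_{\zeta'}$, since they lie in the disjoint sets $f^{-1}(D_1)$ and $f^{-1}(D_2)$.

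To conclude, I would augment each $\alpha_\zeta$ to a planar triod $\hat T_\zeta$ with vertex $\zeta$ by appending a short third arm at $\zeta$, for instance a short subarc of $\partial\D$ on one fixed side, and then invoke Moore's classical theorem that any family of pairwise disjoint triods in the plane is countable. The main obstacle is that the triods $\hat T_\zeta$ are not automatically pairwise disjoint: same-type branches lying in $f^{-1}(D_1)$ or $f^{-1}(D_2)$ may cross, and the $\partial\D$-arms overlap for nearby $\zeta$'s. I would handle this by a further refinement of the classification, recording for each $\zeta$ the connected components of $f^{-1}(D_1)$ and $f^{-1}(D_2)$ that contain the two branches, and then shrinking the triods via a standard exhaustion argument to extract a pairwise-disjoint subfamily of the same cardinality as the refined class. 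Moore's theorem then forces each refined class to be countable, and summing over the countably many classes proves $|A|\le\aleph_0$.
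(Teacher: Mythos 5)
The paper does not prove this theorem at all: it is Bagemihl's classical ambiguous point theorem, quoted verbatim from Pommerenke \cite[Corollary~2.20]{Pomm} with a citation and no proof. So there is no ``paper's proof'' to compare against, and your attempt must be judged on its own merits.

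Your opening move --- separate the two disjoint closed cluster sets by rational spherical disks $D_1,D_2$, truncate $\gamma,\gamma'$ to arcs $\tilde\gamma\subset f^{-1}(D_1)$, $\tilde\gamma'\subset f^{-1}(D_2)$ with rational initial points $p_1,p_2\in\D$, and reduce to showing each class $A(D_1,D_2,p_1,p_2)$ is countable --- is the correct and standard reduction. The problem is the second half. The triods $\hat T_\zeta$ you build are very far from pairwise disjoint: every $\hat T_\zeta$ passes through both $p_1$ and $p_2$, so any two of them meet there outright, before one even worries about same-type branches crossing elsewhere or the boundary arms overlapping. Moore's triod theorem (countably many \emph{pairwise disjoint} triods) therefore does not apply as stated, and you are aware of this. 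But the repair you sketch does not work. Refining the class by ``the connected component of $f^{-1}(D_1)$ containing the $D_1$-branch'' is vacuous: that branch contains $p_1$ by construction, so for every $\zeta$ in the class it lies in the \emph{same} component, namely the component of $p_1$ in $f^{-1}(D_1)$; the refinement distinguishes nothing. (Even if it were nontrivial it would not obviously be countable, since $f$ is an arbitrary function and $f^{-1}(D_i)$ is an arbitrary subset of $\D$.) And the final step --- ``shrinking the triods via a standard exhaustion argument to extract a pairwise-disjoint subfamily of the same cardinality'' --- is not a standard argument and is precisely the missing content: after truncation the same-type arms $\tilde\gamma_{\zeta_1},\tilde\gamma_{\zeta_2}\subset f^{-1}(D_1)$ can still intersect arbitrarily close to $\zeta_1$, and the boundary arms of two nearby $\zeta$'s necessarily overlap, since $\partial\D$ cannot contain uncountably many pairwise disjoint nondegenerate arcs. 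The cross-disjointness you do have ($\tilde\gamma_{\zeta_1}\cap\tilde\gamma'_{\zeta_2}=\emptyset$, which follows from $f^{-1}(D_1)\cap f^{-1}(D_2)=\emptyset$) is the genuinely useful structural fact, but the classical proof exploits it via a more delicate separation argument (Jordan-curve/crosscut style), not by a direct appeal to pairwise-disjoint triods. As written, then, there is a real gap between the classification step and the conclusion, and the proposed bridge does not hold. I would suggest either quoting Bagemihl's theorem as the paper does, or consulting the proof in Collingwood and Lohwater (or Pommerenke) and reproducing the actual topological lemma that plays the role you wanted Moore's theorem to play.
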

In fact we shall use the obvious adaptation of Theorem~\ref{Bag} from $\D$ to the upper half-plane $\Ha=\{z:\Im z>0\}$.

\section{A result on conformal maps}
\setcounter{equation}{0}
To prove Theorem~\ref{thm1.1} we require two results on the boundary behaviour of a conformal map, each of which states, roughly speaking, that if the map behaves in a certain way near a boundary point, then its boundary values behave in a similar way nearby. In \cite[page~220, Exercise~2]{Pomm} a result of this type is derived from Theorem~\ref{Pomm1}, but here we give more precise results of this type, again using Theorem~\ref{Pomm1}. For simplicity we state these results in the upper half-plane.
\begin{theorem}\label{conf-thm}
Let $\phi: \Ha \to \C$ be a conformal map, let $w_0\in\C\setminus \phi(\Ha)$, and let $\lambda>1$ and $\eps>0$. Also, for $n\ge 0$, put
\[
I_n=[\lambda^{n-1/2},\lambda^{n+1/2}]\quad\text{and}\quad E_n=\{t\in I_n: |\phi(t)-w_0|\ge \eps\},
\]
\[
J_n=[n-\tfrac12,n+\tfrac12] \quad\text{and}\quad F_n=\{t\in J_n: |\phi(t)-w_0|\ge \eps\}.
\]
\begin{itemize}
\item[(a)]
If $\phi(\lambda^n i)\to w_0$ as $n\to\infty$, then
\begin{equation}\label{sum1}
\sum_{n=0}^{\infty}\frac{1}{\log(\lambda^n/{\rm cap\,}E_n)}<\infty.
\end{equation}
\item[(b)]
If $\phi(n+i)\to w_0$ as $n\to \infty$, then
\begin{equation}\label{sum2}
\sum_{n=0}^{\infty}\frac{1}{\log(1/{\rm cap\,}F_n)}<\infty.
\end{equation}
\end{itemize}
\end{theorem}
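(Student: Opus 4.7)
My plan is to apply Theorem~\ref{Pomm1} after conformally rescaling $\phi$ at each base point, and to sum the resulting capacity estimates. The two parts reduce to a common form. For part~(b), set $\tau_n(\zeta)=n+i(1-\zeta)/(1+\zeta)$, so $\tau_n\colon\D\to\Ha$ is conformal with $\tau_n(0)=n+i$ and its restriction to a fixed arc $A_0\subset\partial\D$ is a bi-Lipschitz bijection onto $J_n$ with constants independent of~$n$ (indeed $|\tau_n'(\zeta)|=2/|1+\zeta|^2$ is bounded above and below on $A_0$). Writing $f_n=\phi\circ\tau_n$ and $F_n^*=\tau_n^{-1}(F_n)\subset A_0$, Theorem~\ref{Pomm2}(c) gives $\mathrm{cap}(F_n)\asymp\mathrm{cap}(F_n^*)$, so~(b) reduces to $\sum_n 1/\log(1/\mathrm{cap}(F_n^*))<\infty$. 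For part~(a) the analogous choice $\tau_n(\zeta)=\lambda^n i(1-\zeta)/(1+\zeta)$ yields $|\tau_n'|\asymp\lambda^n$ on a fixed arc $A_0(\lambda)$, whence $\mathrm{cap}(E_n)\asymp\lambda^n\mathrm{cap}(E_n^*)$ by Theorem~\ref{Pomm2}(b),(c) and the sum becomes $\sum_n 1/\log(1/\mathrm{cap}(E_n^*))$, of the same form as~(b).

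For each $f_n$ I then apply Theorem~\ref{Pomm1} with $E=F_n^*$ (or $E_n^*$) and a suitable open set $V_n\subset\phi(\Ha)$. Writing $d_n=|\phi(\mathrm{base}_n)-w_0|$, the hypothesis gives $d_n\to0$. Koebe's $\tfrac14$-theorem applied to $f_n$ gives $|f_n'(0)|=2|\phi'(\mathrm{base}_n)|\le 4d_n$ (since $w_0\in\partial\phi(\Ha)$), so taking $V_n\subset\{|w-w_0|\ge 2d_n\}$ ensures $\mathrm{dist}(f_n(0),V_n)\ge d_n\ge|f_n'(0)|/4$, verifying the first hypothesis with $\alpha=1/4$. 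For the second hypothesis, any curve $C\subset\D$ from $0$ to $F_n^*$ has $f_n$-image running from $\phi(\mathrm{base}_n)$ (at distance $d_n$ from~$w_0$) to a boundary value at distance $\ge\epsilon$ from~$w_0$, so it traverses the annular part of $V_n$ radially, giving an immediate lower bound on $\Lambda(f_n(C)\cap V_n)$ in terms of the annular width.

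The summability of the series rests on a consequence of univalence that I expect to be the key quantitative input. Choose hyperbolic neighborhoods $\Delta_n$ of $\mathrm{base}_n$ of small enough radius that they are pairwise disjoint in $\Ha$ (any hyperbolic radius less than the fixed hyperbolic separation of consecutive base points works; for part~(a) a radius less than $(\log\lambda)/2$ suffices). Then $\phi(\Delta_n)$ are pairwise disjoint subsets of $\phi(\Ha)$, and for large $n$ they lie in a fixed bounded disc about $w_0$; hence their total area is finite and, by Koebe distortion,
\[
\sum_n|\phi'(\mathrm{base}_n)|^2<\infty.
\]
The final step is to engineer $V_n$ so that Theorem~\ref{Pomm1} produces an estimate of the form $1/\log(1/\mathrm{cap}(F_n^*))\le C|\phi'(\mathrm{base}_n)|^2$, from which summability follows at once.

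The main obstacle is the precise design of $V_n$ in the preceding step. A naïve choice such as $V_n=\phi(\Ha)\cap\{2d_n<|w-w_0|<\epsilon\}$ satisfies all the qualitative hypotheses but only yields a uniform bound on $\mathrm{cap}(F_n^*)$, since the ratio $\beta_n^2/\mathrm{area}(V_n)$ is $\Theta(1)$ independent of~$n$. To achieve the quantitatively sharper bound one has to replace the fixed outer annulus by a region whose area shrinks like $|\phi'(\mathrm{base}_n)|^2$ while still forcing a lower bound $\beta_n\gtrsim 1$ on the traversal length. Realising both properties simultaneously by exploiting Koebe distortion on~$\Delta_n$---to convert a thin shell around $w_0$ of width $\asymp|\phi'(\mathrm{base}_n)|$ into a region whose $\tau_n$-pullback forces curves to the boundary arc to have substantial length---is the technical crux of the argument.
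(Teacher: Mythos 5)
Your proposal correctly identifies the two main ingredients (rescaling to apply Theorem~\ref{Pomm1} at each base point, and exploiting univalence to get a summable bound), and your observation that $\sum_n|\phi'(\mathrm{base}_n)|^2<\infty$ is correct. However you acknowledge, and I agree, that the construction of $V_n$ is a genuine gap, and the route you are pursuing does not close it.

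The difficulty is not merely technical. You choose $\tau_n\colon\D\to\Ha$ to be a full Riemann map of the half-plane, so $f_n(\D)=\phi(\Ha)$ is the \emph{same} domain for every~$n$. Any set $V_n$ you pick must then live in this fixed domain, and a curve $C\subset\D$ from $0$ to $F_n^*$ can wander anywhere in $\Ha$ under $\tau_n$; there is no reason its $\phi$-image should meet a prescribed small region near $w_0$. Worse, the two demands you place on $V_n$ conflict: you want $\beta_n$ (the traversal length) bounded below by a constant but $\mathrm{area}(V_n)\lesssim|\phi'(\mathrm{base}_n)|^2\to 0$. A region of vanishing area that every such curve must cross with length $\gtrsim 1$ would have to be an extremely thin shell positioned exactly where the curve passes, and univalence alone does not furnish this.

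The idea you are missing is \emph{localization in the domain before applying Theorem~\ref{Pomm1}}. The paper restricts $\phi$ to the semi-annuli $S_n=\{z\in\Ha:\lambda^{n-1}<|z|<\lambda^{n+1}\}$ (squares in part~(b)), taking $\psi_n\colon\D\to S_n$ with $\psi_n(0)=z_n$ and applying Theorem~\ref{Pomm1} to $\Psi_n=\phi\circ\psi_n$. Because $I_n\subset\partial S_n$, a curve from $0$ to $\psi_n^{-1}(E_n)$ has $\psi_n$-image confined to $S_n$, so its $\phi$-image lies in $\phi(S_n)$ and must cross the fixed annulus $A(\eps)=\{\tfrac12\eps<|w-w_0|<\eps\}$ inside $\phi(S_n)$; thus $V_n=\phi(S_n)\cap A(\eps)$ satisfies the crossing hypothesis with $\beta=\tfrac12\eps$. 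Univalence then enters directly: the $S_n$ of a fixed parity are disjoint, hence so are the corresponding $\phi(S_n)\cap A(\eps)$, and these all sit inside $A(\eps)$, giving $\sum_n\mathrm{area}(V_n)\le 2\,\mathrm{area}(A(\eps))<\infty$ with no need for the $|\phi'|^2$ estimate at all. One also needs Bagemihl's ambiguous point theorem (Theorem~\ref{Bag}) to discard a countable set of bad boundary points before invoking the crossing lemma, and Carath\'eodory's theorem plus the reflection principle to transfer the capacity estimate from $\psi_n^{-1}(E_n)\subset\partial\D$ back to $E_n\subset\R$ with the correct scaling factor $\lambda^n$.
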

\begin{proof}
We first prove part~(a). For $n\ge 0$, define
\begin{equation}\label{zn-Sn}
z_n=\lambda^n i\quad\text{and}\quad S_n=\{z\in\Ha:\lambda^{n-1}<|z|<\lambda^{n+1}\},
\end{equation}
so
\[
I_n=[\lambda^{n-1/2},\lambda^{n+1/2}] \subset\partial S_n \cap \partial \Ha.
\]
Then, for $n\ge 0$, let $\psi_n$ denote a conformal map of~$\D$ onto the semi-annulus $S_n$ such that $\psi_n(0)=z_n$. We can choose these maps so that $\psi_n=\lambda^n \psi_0$ for $n\ge 0$. Then, by Carath\'eodory's theorem \cite[page~24]{Pomm}, each $\psi_n$ extends to a continuous one-one map between the boundaries of $\partial \D$ and $\partial S_n$. By the reflection principle, this extension of $\psi_n$ is analytic with finite non-zero derivative on $\partial \D$, except at the four preimages of the vertices of $\partial S_n$.


Now let $A(\eps)=\{w:\tfrac12\eps<|w-w_0|<\eps\}$ and, for $n\ge 0$, define
\begin{itemize}
\item
$V_n=\phi(S_n)\cap A(\eps)$,
\item
$E'_n=\{t\in E_n: t \text{ is not an ambiguous point of }\phi$\}.
\end{itemize}
Then let the integer~$N$ be chosen so large that
\begin{equation}\label{Nlarge}
|\phi(z_n)-w_0|=|\phi(\lambda^n i)-w_0|\le \tfrac{1}{10} \eps,\quad\text{for }n\ge N.
\end{equation}
Note that if $n\ge N$ and $E_n\ne \emptyset$, then $V_n$ is non-empty.

We shall apply Theorem~\ref{Pomm1} to the conformal map $\Psi_n=\phi\circ \psi_n$, the non-empty open subset~$V_n$ of $\Psi_n(\D)$, and the Borel subset $\psi_n^{-1}(E'_n)$ of $\partial \D$, where $n\ge N$. We first show that
\begin{equation}\label{connect}
\Lambda(\Psi_n(C)\cap V_n)\ge \tfrac12 \eps, \;\text{for all curves } C \text{ in } \D \text{ that connect } 0 \text{ to } \psi_n^{-1}(E'_n).
\end{equation}
Indeed,
\[
|\Psi_n(0)-w_0|=|\phi(z_n)-w_0|\le \tfrac{1}{10} \eps,
 \]
by \eqref{Nlarge}, and if a curve~$C$ in $\D$ connects~$0$ to a point $\zeta\in \psi_n^{-1}(E'_n)$, then since $\Psi_n$ has an angular limit at~$\zeta$ such that $|\Psi_n(\zeta)-w_0|\ge \eps$ and $\zeta$ is not an ambiguous point of $\Psi_n$, we have
\[
\limsup_{s\to\zeta,\, s\in C} |\Psi_n(s)-w_0|\ge \eps.
\]
Hence $\Psi_n(C)$ must cross the annulus $A(\eps)$, passing through $V_n$, so \eqref{connect} holds.

Next, for $n\ge N$, by the definition of $V_n$ and \eqref{Nlarge},
\[
{\rm dist}(\Psi_n(0),V_n)={\rm dist}(\phi(z_n),V_n)\ge \tfrac12 \eps-\tfrac{1}{10}\eps = \tfrac{2}{5}\eps.
\]
Also, for $n\ge N$, we deduce by Koebe's theorem \cite[Corollary~1.4]{Pomm} and \eqref{Nlarge} that
\begin{align*}
|\Psi'_n(0)|&\le 4{\rm \,dist}(\Psi_n(0),\partial \Psi_n(\D))\\
&=4{\rm \,dist}(\phi(z_n),\partial \phi(S_n))\\
&\le 4|\phi(z_n)-w_0|\quad (\text{since }w_0\notin \phi(S_n))\\
&\le \tfrac25 \eps.
\end{align*}
Therefore, for $n\ge N$ we can indeed apply Theorem~\ref{Pomm1} to $\Psi_n$, $V_n$ and $\psi_n^{-1}(E'_n)$, with $\alpha=1$ and $\beta=\tfrac12\eps$ to give
\begin{equation}\label{capEn}
2\pi{\,\capa} \psi_n^{-1}(E'_n)\le 15\exp\left(-\frac{\tfrac14\pi\eps^2}{{\are} V_n}\right).
\end{equation}
As noted earlier, each $\psi_n$ extends analytically to most points of $\partial \D$ and in particular to the interior of the arc $\alpha_n=\partial \D \cap \psi_n^{-1}([\lambda^{n-1},\lambda^{n+1}])$ with non-zero derivative there. Also, $\psi_n=\lambda^n \psi_0$, for $n\ge 0$. Thus, by Theorem~\ref{Pomm2}, part~(c), and \eqref{capEn}, there exists an absolute constant $C>0$ such that, for $n\ge N$,
\begin{align}\label{cn}
{\capa} E'_n &\le \max\{|\psi'_n(\zeta)|:\zeta\in \alpha_n\}\,{\capa} \psi_n^{-1}(E'_n)\notag\\
&\le C\lambda^n \exp\left(-\frac{\tfrac14 \pi\eps^2}{{\are}\, V_n}\right).
\end{align}
Now $E_n$ differs from $E'_n$ by at most a countable set, by Theorem~\ref{Bag}, so
\begin{equation}\label{C-est}
\frac{1}{\log(C\lambda^n/{\capa} E_n)}\le \frac{{\are}\, V_n}{\tfrac14\pi\eps^2}.
\end{equation}
Since $V_n$, $n\ge 0$, are disjoint subsets of $A(\eps)$ we have $\sum_{n\ge 0} \,{\are} V_n \le{\are} A(\eps)$, so \eqref{sum1} follows immediately.

The proof of part~(b) proceeds in a similar way, on replacing \eqref{zn-Sn} by
\[
z_n=n+i\quad\text{and}\quad S_n=\{z:|\Re z-n|<1, 0<\Im z<2\}, \quad n\ge 0,
\]
and using the fact that $\phi(z_n)\to w_0$ as $n\to\infty$.
\end{proof}
{\it Remark}\; Note that it follows from \eqref{C-est} that the conclusion \eqref{sum1} can be slightly strengthened to
\[
\sum_{n=0}^{\infty} \frac{1}{\log(C\lambda^n/{\capa} E_n)}\le 3,
\]
where $C$ is a certain positive absolute constant, and similarly \eqref{sum2} can be strengthened to
\[
\sum_{n=0}^{\infty} \frac{1}{\log(C/{\capa} F_n)}\le 3.
\]

\section{Proof of Theorem~\ref{thm1.1}}
\setcounter{equation}{0} %
Without loss of generality we can assume that~$U$ is an invariant univalent {\bd} of~$f$. Then $U$ is {\sconn}, so there is a Riemann map $\phi$ from the upper half-plane~$\Ha$ to~$U$. Thus
\[
g=\phi^{-1}\circ f\circ \phi
\]
is a conformal map of $\Ha$ onto $\Ha$ and hence a M\"obius map with no fixed points in~$\Ha$. Therefore (see \cite[page~4]{aB91}, for example) we can choose~$\phi$ in such a way that $g$ is one of the following types:
\[
g(w)=\left\{
 \begin{array}{ll}
 \lambda w,& \text{where } \lambda>1,\\
 w+1.
 \end{array}
 \right.
\]
In the first case the Baker domain is said to be {\it hyperbolic} and in the second case it is said to be {\it parabolic}; see \cite{BF01}. In either case we have $g^n(w)\to\infty$ as $n\to \infty$ whenever $w\in \Ha$.

The idea of the proof is to show that for all $t\in \partial\Ha$, apart from a set of capacity zero, we have
\begin{equation}\label{cond1}
\phi \text{ has a finite angular limit at } g^n(t) \text{ for all } n\ge 0,
\end{equation}
and
\begin{equation}\label{cond2}
\phi(g^n(t))\to\infty \text{ as } n\to \infty.
\end{equation}
The fact that \eqref{cond1} holds follows immediately from Theorem~\ref{Pomm0} and Theorem~\ref{Pomm2}, part~(f). Once \eqref{cond2} has been established we can deduce that for all $t\in\partial\Ha$, apart from a set of capacity zero, we have
\begin{align*}
\phi(g^n(t))&=\lim_{g^n(w)\to g^n(t)} \phi(g^n(w))\\
&=\lim_{w\to t} f^n(\phi(w))\\
&=f^n(\phi(t)),
\end{align*}
where the two limits are angular limits at boundary points of~$\Ha$, and hence $\phi(t)\in I(f)$. By Theorem~\ref{Pomm2}, part~(f), this is sufficient to prove the second statement of Theorem~\ref{thm1.1}.

Note that if the boundary of~$U$ is a Jordan curve through~$\infty$ (see \cite{BF01} for examples of this), then both \eqref{cond1} and \eqref{cond2} hold for all $t\in \partial \Ha$ with at most one exception, by Carath\'eodory's theorem, so all points of $\partial U$ are escaping with at most one exception.

We consider first the case when $g(w)=\lambda w$. Let $w_0=i$ and $z_0=\phi(w_0)\in U$. Then, for $n\ge 0$, define
\[w_n=g^n(w_0)=\lambda^n i.
\]

In order to be able to apply Theorem~\ref{conf-thm}, we let $\Phi(w)=1/(\phi(w)-c)$, where $c\notin U$. Then $\Phi$ is a conformal map of $\Ha$ into $\C\setminus\{0\}$. Since $U$ is a {\bd},
\begin{equation}\label{Zn}
\phi(w_n)=f^n(z_0)\to\infty\;\text{ as }n\to\infty,\quad\text{so}\quad \Phi(w_n)\to 0\;\text{ as }n\to\infty.
\end{equation}

Thus we can apply Theorem~\ref{conf-thm}, part~(a), with $w_0=0$,
\[
I_n=[\lambda^{n-1/2},\lambda^{n+1/2}] \quad \text{and}\quad E_n=\{t\in I_n: |\Phi(t)|\ge \eps\},\quad\text{for } n\ge 0,
\]
where $\eps>0$ is arbitrary, to deduce that
\begin{equation}\label{sum}
\sum_{n=0}^{\infty}\frac{1}{\log(\lambda^n/{\rm cap\,}E_n)}<\infty.
\end{equation}
For $n\ge 0$, let $\tilde E_n=E_n/\lambda^n \subset I_0$, so ${\capa}\tilde E_n={\capa}E_n/\lambda^n$, by Theorem~\ref{Pomm2}, part~(b). We deduce that if
\[K_m=\bigcup_{n\ge m}\tilde E_n,\;\text{ for } m\ge N,\]
then, by Theorem~\ref{Pomm2}, part~(d), and \eqref{sum}, together with the fact that the interval $I_0$ has length less than $\sqrt{\lambda}$,
\begin{align*}
\frac{1}{\log(\sqrt{\lambda}/{\capa}K_m)}&\le \sum_{n\ge m}\frac{1}{\log(\sqrt{\lambda}/{\capa}\tilde E_n)}\\
&=\sum_{n\ge m}\frac{1}{\log(\lambda^{n+1/2}/{\capa}E_n)}<\infty.
\end{align*}
It follows that ${\capa} K_m \to 0$ as $m\to \infty$, so the set
\begin{equation}\label{cap0}
K=\bigcap_{m\ge N}K_m=\bigcap_{m\ge N}\bigcup_{n\ge m} \tilde E_n\;\text{ has capacity } 0.
\end{equation}
Now~$K$ consists of those points $t\in I_0$ such that for infinitely many~$n$ the angular limit of $\Phi$ at $g^n(t)=\lambda^n t$ exists and lies in $\{z:|z|\ge \eps\}$. Since the set of points where $\Phi$ has no angular limit is of capacity~$0$, by Theorem~\ref{Pomm0}, it follows that, for all~$t$ in $I_0$ apart from a set of capacity zero, we have
\[
|\Phi(g^n(t))|< \eps,  \text{ for all sufficiently large } n.
\]
Since $\eps>0$ is arbitrary and a countable union of sets of capacity zero has capacity zero, we deduce that, for all~$t$ in $I_0$ apart from a set of capacity zero, we have
\[
\Phi(g^n(t))\to 0 \; \text{ as } n\to \infty
\]
and hence
\[
\phi(g^n(t))\to \infty \; \text{ as } n\to \infty.
\]
Since $g(w)=\lambda w$, it follows readily that this property holds for all $t\in\partial \Ha$ apart from an exceptional set of capacity zero, which proves \eqref{cond2}.

The proof of \eqref{cond2} in the case $g(w)=w+1$ proceeds in a similar manner with $w_0=i$, $z_0=\phi(w_0)\in U$ and
\[w_n=g^n(w_0)=n+i,\quad\text{so}\quad \phi(w_n)=f^n(z_0) \to\infty\;\text{ as } n\to\infty,
\]
by using Theorem~\ref{conf-thm}, part~(b). We omit the details.

\section{Proof of Theorem~\ref{thm1.2}}
\setcounter{equation}{0} %
Suppose that~$f$ is a {\tef}, that~$E$ is a subset of $I(f)$ such that $J(f)\subset\overline{E}$, and that~$E$ is contained in the union of finitely many components of $I(f)$. Under these hypotheses we proved in \cite[Theorem~5.1]{RS11} that
\begin{itemize}
 \item[(a)]\(I(f)\cap J(f)\) is contained in one component, $I_1$ say, of
$I(f)$;
 \item[(b)]all the components of $I(f)$ are unbounded, and they consist of
\begin{itemize}
 \item[(i)] $I_1$, which also contains any escaping {\wand}s and
any Baker domains of $f$ with at least one boundary point in
$I(f)$,
 \item[(ii)] any Baker domains of $f$ with no boundary points in
$I(f)$ and the infinitely many preimage components of such
Baker domains.
\end{itemize}
\end{itemize}
We shall show that under these hypotheses Baker domains whose boundaries do not meet $I(f)$ cannot occur, so the result from \cite{RS11} quoted above implies that $I(f)$ has just one component. Therefore, if $I(f)$ is disconnected, then $E$ meets infinitely many components of $I(f)$, as required.

Suppose then that~$U$ is a Baker domain of~$f$ whose boundary does not meet $I(f)$. By Corollary~\ref{cor1.1}, the boundary of~$U$ is disconnected. Then $U$ has more than one complementary component, each of which is closed and unbounded, and meets $J(f)$.

We now note that $J(f)=\overline{I(f)\cap J(f)}$, which follows by the blowing up property of $J(f)$ and the fact that $I(f)\cap J(f)\ne\emptyset$; see~\cite{aE89}. All the points of $I(f)\cap J(f)$ lie in complementary components of $U$, and $I(f)\cap J(f)$ cannot be contained in a single complementary component of~$U$ because $J(f)=\overline{I(f)\cap J(f)}$. Hence the component~$I_1$ of $I(f)$ in part~(a) meets at least two complementary components of $U$ and so it must meet the boundaries of these two complementary components, which are subsets of $\partial U$. This contradicts the fact that $\partial U\cap I(f)=\emptyset$. Hence such a Baker domain cannot exist in this case. This completes the proof of Theorem~\ref{thm1.2}.

\end{document}